\providecommand{\U}[1]{\protect\rule{.1in}{.1in}}
\newtheorem{theorem}{Theorem}
\newtheorem{definition}[theorem]{Definition}
\newtheorem{proposition}[theorem]{Proposition}
\newtheorem{remark}[theorem]{Remark}
\newenvironment{proof}[1][Proof]{\noindent\textbf{#1.} }{\ \rule{0.5em}{0.5em}}
\renewcommand{\thefootnote}{\fnsymbol{footnote}}
\begin{document}

\title{Multivalued backward stochastic differential equations with time delayed generators}
\author{Bakarime Diomande$^{a,1}$, Lucian Maticiuc$^{a,b,2,\ast}$\bigskip\\{\small $^{a}$ Faculty of Mathematics, \textquotedblleft Alexandru Ioan
Cuza\textquotedblright\ University,}\\{\small Carol I Blvd., no. 11,
Ia\c{s}i, 700506, Romania}\\{\small $^{b}$ Department of
Mathematics, \textquotedblleft Gheorghe Asachi\textquotedblright\
Technical University,}\\{\small Carol I Blvd., no. 11, Ia\c{s}i,
700506, Romania}} \maketitle

\begin{abstract}
Our aim is to study the following new type of multivalued backward
stochastic differential equation:
\[
\left\{
\begin{array}
[c]{r}-dY\left(  t\right)  +\partial\varphi\left(  Y\left(
t\right)\right)  dt\ni F\left(  t,Y\left(  t\right)  ,Z\left(
t\right) ,Y_{t},Z_{t}\right)
dt+Z\left(  t\right)  dW\left(  t\right)  ,\;0\leq t\leq T,\medskip\\
\multicolumn{1}{l}{Y\left(  T\right)  =\xi~,}\end{array} \right.
\]
where $\partial\varphi$ is the subdifferential of a convex function
and $\left(Y_{t},Z_{t}\right):=(Y(t+\theta),Z(t+\theta))_{\theta\in
\lbrack-T,0]}$ represent the past values of the solution over the
interval $\left[  0,t\right] $. Our results are based on the
existence theorem from Delong \& Imkeller, Ann. Appl. Probab., 2010,
concerning backward stochastic differential equations with time
delayed gene\-rators.
\end{abstract}

\footnotetext[1]{{\scriptsize Corresponding author.}}

\renewcommand{\thefootnote}{\arabic{footnote}} \footnotetext[1]%
{{\scriptsize The work of this author was supported by the project
\textquotedblleft Deterministic and stochastic systems with state
constraints\textquotedblright, code 241/05.10.2011.}} \footnotetext[2]%
{{\scriptsize The work of this author is supported by
POSDRU/89/1.5/S/49944 project.}}
\renewcommand{\thefootnote}{\fnsymbol{footnote}}
\footnotetext{\textit{{\scriptsize E-mail addresses:}} {\scriptsize
bakarime.diomande@yahoo.com (Bakarime Diomande),
lucian.maticiuc@ymail.com (Lucian\ Maticiuc).}}

\textbf{AMS Classification subjects: }60H10, 47J20,
49J40.$\smallskip$

\textbf{Keywords or phrases: }Backward stochastic differential
equations; Time-delayed generators; Subdifferential operator.

\section{Introduction}

In this paper we are interested to study a new type of multivalued
backward
stochastic differential equation (BSDE) formally written as%
\begin{equation}
\left\{
\begin{array}
[c]{r}%
-dY\left(  t\right)  +\partial\varphi\left(  Y\left(  t\right)
\right)  dt\ni F\left(  t,Y\left(  t\right)  ,Z\left(  t\right)
,Y_{t},Z_{t}\right)
dt+Z\left(  t\right)  dW\left(  t\right)  ,\;0\leq t\leq T,\medskip\\
\multicolumn{1}{l}{Y\left(  T\right)  =\xi~,}%
\end{array}
\right.  \label{BSDE time delay_introd}%
\end{equation}
where $\partial\varphi$ is a multivalued operator of subdifferential
type.

We mention that in (\ref{BSDE time delay_introd}) the generator $F$
at the moment $t\in\left[  0,T\right]  $ can depend, unlike the
classi{\footnotesize \-}cal nonlinear BSDEs introduced in
\cite{pa-pe/90} and \cite{pa-pe/92}, on the past values $\left(
Y_{t},Z_{t}\right)  $ on $\left[ 0,t\right]  $ of the solution
$\left(  Y\left(  t\right)  ,Z\left(  t\right)
\right)  $, where%
\begin{equation}
Y_{t}:=(Y(t+\theta))_{\theta\in\lbrack-T,0]}\quad\text{and}\quad
Z_{t}:=(Z(t+\theta))_{\theta\in\lbrack-T,0]}~. \label{def Y_t and Z_t}%
\end{equation}
For this reason we shall call (\ref{BSDE time delay_introd}) BSDE
with time-delayed generator.

Delong and Imkeller were the first who introduced and studied in
\cite{de-im/10} and \cite{de-im/10x} the BSDE of type
(\ref{BSDE time delay_introd}). They considered equation%
\begin{equation}
Y\left(  t\right)  =\xi+\int_{t}^{T}F(s,Y_{s},Z_{s})ds-\int_{t}^{T}%
Z(s)dW(s),\;0\leq t\leq T \label{BSDE time delay_introd 2}%
\end{equation}
and they obtained in \cite{de-im/10} the existence and uniqueness of
the solution for (\ref{BSDE time delay_introd 2}) if the time
horizon $T$ or the Lipschitz constant for the generator $F$ are
sufficiently small. Also they provide a comparison type result,
existence of a measure solution and, in \cite{de-im/10x}, the
Malliavin differentiability of the solution of a time-delayed BSDE
driven by a Levy process.

Dos Reis, R\'{e}veillac and Zhang extend in \cite{do-re-zh/11} the
results of Delong and Imkeller by giving moment and a priori
estimates in general $L^{p}$ spaces and by proving sufficient
conditions for the existence of a solution in $L^{p}$. In addition
it is obtained, under some appropriate regularity conditions, the
relationship between the Malliavin derivatives and the classical
derivatives of the solution process of a delay decoupled
forward-backward stochastic equations. Delong in \cite{de/11}
provides some applications of the delay BSDE in real problems of
financial mathematics and issues related to pricing, hedging and
investment portfolio management.

Concerning the multivalued term we precise that BSDE involving a
subdifferential operator (which are also called backward stochastic
variational inequalities, BSVI) has been treated by Pardoux and
R\u{a}\c{s}canu in \cite{pa-ra/98} where they prove the existence
and the
uniqueness for%
\begin{equation}
Y\left(  t\right)  +\int_{t}^{T}U\left(  s\right)  ds=\xi+\int_{t}%
^{T}F(s,Y(s),Z(s))ds-\int_{t}^{T}Z(s)dW(s), \label{BSDE without time delay}%
\end{equation}
where $U\left(  t\right)  $ is an element from
$\partial\varphi\left( Y\left(  t\right)  \right)  $, and they
generalize the Feymann-Kac type formula in order to represent the
solution of a multivalued parabolic partial differential equation
(PDE). We should mention that the solution $Y$ is reflected at the
boundary of the domain of $\partial\varphi$ and the role of the
process $U$ is to push $Y$ in order to keep it in this domain. More
recently, in \cite{ma-ra/12} it is studied, in the infinite
dimensional framework, a generalized version of (\ref{BSDE without
time delay}) considered on a random time interval (and their
applications to the stochastic PDE). Another approach is giving in
\cite{ra-ro/11}, where, by using the Fitzpatrick function, the
existence problem for the multivalued stochastic differential
equations is reduced to a minimizing problem of a suitable convex
lower semicontinuous function.

The above types of BSDE are connected with the reflected BSDE which
were introduced (in the scalar case and with one-sided reflection)
by N. El Karoui et al. in \cite{ka-ka/97}. They consider BSDE such
that the solution $Y$ is forced to stay above a given lower barrier.
In the last years these type of equations have been intensely
studied and generalized (first by considering the multidimensional
BSDE with two-sided reflection) since there is a wide range of
applications especially in finance, stochastic control or stochastic
games. We emphasize that if the lower and upper obstacles are fixed
then reflected BSDE become a particular case of BSVI of type
(\ref{BSDE without time delay}), by taking $\varphi$ as a indicator
function of the interval defined by obstacles.

The first connection between reflected BSDE and the time delayed
equation (\ref{BSDE time delay_introd 2}) was recently made by Zhou
and Ren in \cite{zh-re/12} where it is proved, under the specific
assumptions of the delayed BSDE and the reflected case, that there
exists a unique solution of a reflected BSDE with time-delayed
generator.

The article is organized as follows: in next section we set up the
notation and the assumptions. The problem is formulated and the main
result is state. Section 3 is devoted to the proof of the existence
of the solution of the multivalued time delayed BSDE that we
consider.

\section{Notations and assumptions}

Let $T\in\left(  0,\infty\right)  $ be a finite time horizon and
$\left\{ W\left(  t\right)  \right\}  _{t\in\left[  0,T\right]  }$
be a $d$-dimensional standard Brownian motion defined on some
complete probability space
$(\Omega,\mathcal{F},\mathbb{P})$. We denote by $\left\{  \mathcal{F}%
_{t}\right\}  _{t\in\left[  0,T\right]  }$ the natural filtration
generated by $\left\{  W\left(  t\right)  \right\}  _{t\in\left[
0,T\right]  }$ and augmented by $\mathcal{N}$ the set
of$\;\mathbb{P}$- null events of
$\mathcal{F}$, i.e.%
\[
\mathcal{F}_{t}=\sigma\{W\left(  r\right)  :0\leq r\leq
t\}\vee\mathcal{N}.
\]

As usual, $\mathcal{B}\left(  \left[  -T,0\right]  \right)  $ stands
for the Borel sets of $\left[  -T,0\right]  $.

Throughout the paper will be needed the following spaces:

\begin{definition}
Let $\mathcal{H}_{T}^{2,m}$ be the Hilbert space of progressively
measurable stochastic processes (p.m.s.p.)
$Y:\Omega\times\lbrack0,T]\rightarrow \mathbb{R}^{m}$ such that
\[
||Y||_{\mathcal{H}_{T}^{2,m}}^{2}=\mathbb{E}\Big[\int_{0}^{T}|Y(s)|^{2}%
ds\Big]<\infty~,
\]
and $\mathcal{S}_{T}^{2,m}$ be the Banach space of p.m.s.p.
$Y:\Omega
\times\lbrack0,T]\rightarrow\mathbb{R}^{m}$ such that%
\[
||Y||_{\mathcal{S}_{T}^{2,m}}^{2}=\mathbb{E}\Big[\sup_{t\in\left[
0,T\right] }|Y(t)|^{2}\Big]<\infty~.
\]

\end{definition}

\begin{definition}
Let $\mathcal{H}_{-T}^{2,m}$ be the space of measurable function
$y:[-T,0]\rightarrow\mathbb{R}^{m}$ such that%
\[
\int_{-T}^{0}|y(s)|^{2}ds<\infty~,
\]
and $\mathcal{S}_{-T}^{2,m}$ be the space of measurable function
$y:[-T,0]\rightarrow\mathbb{R}^{m}$ such that%
\[
\sup_{t\in\left[  -T,0\right]  }|y(t)|^{2}<\infty~.
\]

\end{definition}

The aim of this section is to prove the existence and uniqueness of
a solution $\left(  Y\left(  t\right)  ,Z\left(  t\right)  \right)
_{t\in\left[ 0,T\right]  }$ for the following multivalued BSDE with
time delay generator
(formally written as):%
\begin{equation}
\left\{
\begin{array}
[c]{r}%
-dY\left(  t\right)  +\partial\varphi\left(  Y\left(  t\right)
\right)  dt\ni F\left(  t,Y\left(  t\right)  ,Z\left(  t\right)
,Y_{t},Z_{t}\right)
dt+Z\left(  t\right)  dW\left(  t\right)  ,\;0\leq t\leq T,\medskip\\
\multicolumn{1}{l}{Y\left(  T\right)  =\xi~.}%
\end{array}
\right.  \label{BSDE time delay}%
\end{equation}
where the generator $F$ at time $t\in\left[  0,T\right]  $ depends
on the past values of the solution through $Y_{t}$ and $Z_{t}$
defined by (\ref{def Y_t and Z_t}).

We mention that we will take $Z(t)=0$ and $Y(t)=Y(0)$ for any $t<0.$

The following assumptions will be needed throughout this section:

\begin{itemize}
\item[\textrm{(A}$_{\mathrm{1}}$\textrm{)}] The function $F:\Omega
\times\lbrack0,T]\times\mathbb{R}^{m}\times\mathbb{R}^{m\times d}%
\times\mathcal{S}_{-T}^{2,m}\times\mathcal{H}_{-T}^{2,m}\rightarrow
\mathbb{R}^{m}$ satisfies that there exist $L,K>0$ such that, for
some probability measure $\alpha$ on $\left(
[-T,0],\mathcal{B}\left(  \left[ -T,0\right]  \right)  \right)  $
and for any $t\in\lbrack0,T]$, $\left( y,z\right)  ,\left(
\bar{y},\bar{z}\right)  \in\mathbb{R}^{m}\times
\mathbb{R}^{m\times d}$, $\left(  y_{t},z_{t}\right)  ,\left(  \bar{y}%
_{t},\bar{z}_{t}\right)  \in\mathcal{S}_{-T}^{2,m}\times\mathcal{H}_{-T}%
^{2,m}~$, $\mathbb{P}$-a.s.%
\[%
\begin{array}
[c]{rl}%
\left(  i\right)   & F(\cdot,\cdot,y,z,y_{\cdot},z_{\cdot})\text{ is
}\mathcal{F}_{t}\text{-progressively measurable;}\medskip\\
\left(  ii\right)   & \left\vert
F(t,y,z,y_{t},z_{t})-F(t,\bar{y},\bar {z},y_{t},z_{t})\right\vert
\leq L(\left\vert y-\bar{y}\right\vert +\left\vert
z-\bar{z}\right\vert );\medskip\\
\left(  iii\right)   & \left\vert F(t,y,z,y_{t},z_{t})-F(t,y,z,\bar{y}%
_{t},\bar{z}_{t})\right\vert ^{2}\leq
K\displaystyle\int_{-T}^{0}\left\vert
y(t+\theta)-\bar{y}(t+\theta)\right\vert ^{2}\alpha(d\theta)\medskip\\
&
\quad\quad\quad\quad\quad\quad\quad\quad\quad\quad\quad\quad\quad\quad
\quad\quad\quad\quad+K\displaystyle\int_{-T}^{0}\left\vert
z(t+\theta)-\bar {z}(t+\theta)\right\vert ^{2}\alpha(d\theta)~;
\end{array}
\]
and%
\[%
\begin{array}
[c]{rl}%
\left(  iv\right)   &
\mathbb{E}\Big[\displaystyle\int_{0}^{T}\left\vert
F\left(  t,0,0,0,0\right)  \right\vert ^{2}dt\Big]<\infty~;\medskip\\
\left(  v\right)   & F\left(  t,\cdot,\cdot,\cdot,\cdot\right)
=0\text{, }\forall t<0~.
\end{array}
\]

\end{itemize}

\begin{remark}
The condition of the measure $\alpha$ to be of probability type is
taken only for the simplicity of the calculus. If $\alpha$ will be a
measure with the support in $\left[  -T,0\right]  $ then the
constants from our results will depend in addition by $\alpha\left(
\left[  -T,0\right]  \right)  $.
\end{remark}

\begin{itemize}
\item[\textrm{(A}$_{\mathrm{2}}$\textrm{)}] The function $\varphi
:\mathbb{R}^{m}\rightarrow(-\infty,+\infty]$ is proper ($\varphi
\not \equiv +\infty$), convex and lower semicontinuous (l.s.c. for
short) and
there is no loss of generality in assuming%
\[
\varphi(y)\geq\varphi(0)=0,\forall\,y\in\mathbb{R}^{m}.
\]

\item[\textrm{(A}$_{\mathrm{3}}$\textrm{)}] The terminal data $\xi
:\Omega\rightarrow\mathbb{R}^{m}$ is a $\mathcal{F}_{T}$-measurable
random
variable such that%
\[
\mathbb{E}[|\xi|^{2}+|\varphi(\xi)|]<\infty.
\]

\end{itemize}

\begin{remark}
The assumption (A$_{1}$-v) means that we can extend the solution of
(\ref{BSDE time delay}) for the case of $t<0$ by taking $\left(
Y\left( t\right)  ,Z\left(  t\right)  \right)  :=\left(  Y\left(
0\right)  ,0\right) $ for $t<0$. Concerning the value $F\left(
t,0,0,0,0\right)  $ from the assumption (A$_{1}$-iv), we mention
that this is in fact $F\left( t,y,z,y_{t},z_{t}\right)  $ considered
at $y=z=0$, $y_{t}\equiv0$ and $z_{t}\equiv0$.
\end{remark}

\begin{remark}
As examples we can consider the following functions as generators:%
\begin{align*}
F_{1}\left(  s,y\left(  s\right)  ,z\left(  s\right)
,y_{s},z_{s}\right)   &
:=K\int_{0}^{s}z\left(  s\right)  ds,\\
F_{2}\left(  s,y\left(  s\right)  ,z\left(  s\right)
,y_{s},z_{s}\right)   & :=Kz\left(  s-r\right)  ,\;\forall
s\in\left[  0,T\right]  ,
\end{align*}
where $r$ is a fixed time delay,

\noindent or, more general, the linear time delayed generator%
\[
F\left(  s,y\left(  s\right)  ,z\left(  s\right)
,y_{s},z_{s}\right) :=\int_{-T}^{0}g\left(  s+\theta\right)  z\left(
s+\theta\right) \alpha\left(  d\theta\right)  ,
\]
where $g:\left[  0,T\right]  \rightarrow\mathbb{R}$ is a measurable
and uniformly bounded function with $g\left(  t\right)  =0$ for
$t<0$.
\end{remark}

The subdifferential operator $\partial\varphi$ is defined by%
\[
\partial\varphi(y):=\left\{  y^{\ast}\in\mathbb{R}^{m}:\left\langle y^{\ast
},v-y\right\rangle +\varphi(y)\leq\varphi(v),\forall v\in\mathbb{R}%
^{m}\right\}
\]
and by $\left(  y,y^{\ast}\right)  \in\partial\varphi$ we understand
that
$y\in\mathrm{Dom}(\partial\varphi)$ and $y^{\ast}\in\partial\varphi(y)$, where%
\[
\mathrm{Dom}(\partial\varphi):=\left\{  y\in\mathbb{R}^{m}:\partial
\varphi(y)\neq\emptyset\right\}  .
\]
We know that that, if $m=1$, then in every $y\in\mathrm{Dom}(\varphi
):=\left\{  y\in\mathbb{R}:\varphi(y)<+\infty\right\}  $ we have%
\[
\partial\varphi(y)=\left[  \varphi_{-}^{\prime}(y),\varphi_{+}^{\prime
}(y)\right]  \cap\mathbb{R~},
\]
where $\varphi_{-}^{\prime}$ and $\varphi_{+}^{\prime}$ are
respectively the left and the right derivative.

\begin{remark}
It is know that the subdifferential operator $\partial\varphi$ is a
maximal monotone operator, i.e. is maximal in the class of operators
which satisfy the
condition%
\[
\left\langle y^{\ast}-z^{\ast},y-z\right\rangle
\geq0~,\;\forall\left( y,y^{\ast}\right)  ,\left(  z,z^{\ast}\right)
\in\partial\varphi.
\]
Conversely, in the case $m=1$, we recall that, if $A$ is a given
maximal monotone operator on $\mathbb{R}$, then there exists a
proper l.s.c. convex function $\psi$ such that $A=\partial\psi$;
Hence equation (\ref{BSDE time delay}) is equivalent in this case
with the study of the
equation:%
\begin{equation}
\left\{
\begin{array}
[c]{r}%
-dY\left(  t\right)  +A\left(  Y\left(  t\right)  \right)  dt\ni
F\left( t,Y\left(  t\right)  ,Z\left(  t\right)  ,Y_{t},Z_{t}\right)
dt+Z\left(
t\right)  dW\left(  t\right)  ,\;0\leq t\leq T,\medskip\\
\multicolumn{1}{l}{Y\left(  T\right)  =\xi~.}%
\end{array}
\right.  \label{BSDE time delay 2}%
\end{equation}
We mention that in the case $m\geq2$ the problem of the existence of
a solution for (\ref{BSDE time delay 2}) is an open problem.
\end{remark}

\begin{definition}
The triple $(Y,Z,U)$ is a solution of time-delayed multivalued BSDE
(\ref{BSDE time delay}) if%
\begin{equation}%
\begin{array}
[c]{rl}%
\left(  i\right)  & \left(  Y,Z,U\right)  \in\mathcal{S}_{T}^{2,m}%
\times\mathcal{H}_{T}^{2,m\times d}\times\mathcal{H}_{T}^{2,m}~,\medskip\\
\left(  ii\right)  &
\mathbb{E}\Big[\displaystyle\int_{0}^{T}\varphi\left(
Y\left(  t\right)  \right)  dt\Big]<\infty,\;\medskip\\
\left(  iii\right)  & \left(  Y\left(  t\right)  ,U\left(  t\right)
\right) \in\partial\varphi,\;\mathbb{P}\left(  d\omega\right)
\otimes
dt\text{,\ a.e.\ on\ }\Omega\times\lbrack0,T],\medskip\\
\left(  iv\right)  & \multicolumn{1}{r}{Y(t)+\displaystyle\int_{t}%
^{T}U(s)ds=\xi+\int_{t}^{T}F(s,Y(s),Z(s),Y_{s},Z_{s})ds-\int_{t}%
^{T}Z(s)dW(s),\medskip}\\
& \multicolumn{1}{r}{\forall t\in\lbrack0,T],\;\text{a.s.}}%
\end{array}
\label{def sol}%
\end{equation}

\end{definition}

\begin{remark}
It is easy to show that if $\left(  Y,Z\right)  \in\mathcal{S}_{T}^{2,m}%
\times\mathcal{H}_{T}^{2,m\times d}$ then the generator is well
defined and
$\mathbb{P}$-integrable, since the following inequality holds true:%
\begin{equation}%
\begin{array}
[c]{r}%
\displaystyle\int_{0}^{T}\left\vert
F(s,Y(s),Z(s),Y_{s},Z_{s})\right\vert ^{2}ds\leq3\left(
2L^{2}+K\right)  T\sup\limits_{t\in\left[  0,T\right] }\left\vert
Y\left(  s\right)  \right\vert ^{2}+3\left(  2L^{2}+K\right)
\int_{0}^{T}\left\vert Z(s)\right\vert ^{2}ds\medskip\\
+3\displaystyle\int_{0}^{T}\left\vert F(s,0,0,0,0)\right\vert
^{2}ds.
\end{array}
\label{remark 1}%
\end{equation}
Indeed (see also Lemma 1.1 in \cite{do-re-zh/11}), from Assumption
\textrm{(A}$_{\mathrm{1}}$\textrm{)} and Fubini's theorem%
\begin{equation}%
\begin{array}
[c]{l}%
\displaystyle\int_{0}^{T}\left\vert
F(s,Y(s),Z(s),Y_{s},Z_{s})\right\vert
^{2}ds\leq3\int_{0}^{T}\left\vert F(s,Y(s),Z(s),Y_{s},Z_{s})-F(s,0,0,Y_{s}%
,Z_{s})\right\vert ^{2}ds\medskip\\
\quad+3\displaystyle\int_{0}^{T}\left\vert F(s,0,0,Y_{s},Z_{s}%
)-F(s,0,0,0,0)\right\vert ^{2}ds+3\int_{0}^{T}\left\vert
F(s,0,0,0,0)\right\vert ^{2}ds.\medskip\\
\leq6L^{2}\displaystyle\int_{0}^{T}\left(  \left\vert
Y(s)\right\vert
^{2}+\left\vert Z(s)\right\vert ^{2}\right)  ds+3K\int_{0}^{T}\int_{-T}%
^{0}\left(  \left\vert Y\left(  s+\theta\right)  \right\vert
^{2}+\left\vert Z\left(  s+\theta\right)  \right\vert ^{2}\right)
\alpha\left(
d\theta\right)  ds\medskip\\
\quad+3\displaystyle\int_{0}^{T}\left\vert F(s,0,0,0,0)\right\vert
^{2}ds.
\end{array}
\label{remark 1_calculus}%
\end{equation}
The conclusion follows now since we have%
\[%
\begin{array}
[c]{l}%
\displaystyle\int_{0}^{T}\int_{-T}^{0}\left(  \left\vert Y\left(
s+\theta\right)  \right\vert ^{2}+\left\vert Z\left(
s+\theta\right)
\right\vert ^{2}\right)  \alpha\left(  d\theta\right)  ds=\int_{-T}^{0}%
\int_{0}^{T}\left(  \left\vert Y\left(  s+\theta\right)  \right\vert
^{2}+\left\vert Z\left(  s+\theta\right)  \right\vert ^{2}\right)
ds\alpha\left(  d\theta\right)  \medskip\\
=\displaystyle\int_{-T}^{0}\int_{t+\theta}^{T+\theta}\left(
\left\vert Y\left(  s\right)  \right\vert ^{2}+\left\vert Z\left(
s\right)  \right\vert
^{2}\right)  ds\alpha\left(  d\theta\right)  \leq\int_{-T}^{0}\int_{0}%
^{T}\left(  \left\vert Y\left(  s\right)  \right\vert
^{2}+\left\vert Z\left(
s\right)  \right\vert ^{2}\right)  ds\alpha\left(  d\theta\right)  \medskip\\
=\displaystyle\int_{0}^{T}\left(  \left\vert Y(s)\right\vert
^{2}+\left\vert Z(s)\right\vert ^{2}\right)  ds.
\end{array}
\]

\end{remark}

Throughout this section $C$ will designate a constant (possible
depending on $L$) which my vary from line to line.$\smallskip$

In order to obtain the uniqueness of the solution we will prove the
next a priori estimate.

\begin{proposition}
\label{uniqueness}Let assumptions \textrm{(A}$_{\mathrm{1}}\mathrm{-}%
$\textrm{A}$_{\mathrm{3}}$\textrm{) }be satisfied. Let $\left(
Y,Z,U\right)
,(\bar{Y},\bar{Z},\bar{U})\in\mathcal{S}_{T}^{2,m}\times\mathcal{H}%
_{T}^{2,m\times d}\times\mathcal{H}_{T}^{2,m}$ be the solutions of
(\ref{BSDE time delay}) corresponding to $\left(  \xi,F\right)  $
and $\left( \bar{\xi},\bar{F}\right)  $ respectively. If time
horizon $T$ and Lipschitz constant $K$ are small enough such that
$Ke^{\beta T}<2L^{2}$, then there
exists some constants $C_{1}=C_{1}\left(  L\right)  >0$ and $C_{2}%
=C_{2}\left(  L\right)  >0$, independent of $K$ and $T$, such that%
\[%
\begin{array}
[c]{l}%
||Y-\bar{Y}||_{\mathcal{S}_{T}^{2,m}}^{2}+||Z-\bar{Z}||_{\mathcal{H}%
_{T}^{2,m\times d}}^{2}\leq
C_{1}e^{C_{2}T}\,\mathbb{E}\Big[|\xi-\bar{\xi
}|^{2}\medskip\\
\quad\quad\quad\quad\quad\quad\quad\quad\quad\quad\quad\quad\quad
\quad\;+\displaystyle\int_{0}^{T}\left\vert
F(s,Y(s),Z(s),Y_{s},Z_{s})-\bar
{F}(s,Y(s),Z(s),Y_{s},Z_{s})\right\vert ^{2}ds\Big].
\end{array}
\]

\end{proposition}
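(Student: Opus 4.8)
The plan is to run an energy estimate on the differences $\delta Y:=Y-\bar{Y}$, $\delta Z:=Z-\bar{Z}$, $\delta U:=U-\bar{U}$ and $\delta\xi:=\xi-\bar{\xi}$, using an exponential weight $e^{\beta s}$ with $\beta>0$ to be fixed in terms of $L$ alone. Subtracting the two equations of type (\ref{def sol}), the process $\delta Y$ satisfies $d\,\delta Y(s)=(\delta U(s)-\Delta F(s))\,ds+\delta Z(s)\,dW(s)$, where $\Delta F(s):=F(s,Y(s),Z(s),Y_{s},Z_{s})-\bar{F}(s,\bar{Y}(s),\bar{Z}(s),\bar{Y}_{s},\bar{Z}_{s})$. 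Applying It\^{o}'s formula to $e^{\beta s}|\delta Y(s)|^{2}$, integrating from $t$ to $T$, taking expectation (so the $dW$-term vanishes by the $\mathcal{H}_{T}^{2}$-integrability of $\delta Y$ and $\delta Z$), and rearranging yields an identity in which the term $2\,\mathbb{E}\int_{t}^{T}e^{\beta s}\langle\delta Y(s),\delta U(s)\rangle\,ds$ sits on the left-hand side. Since $(Y,U),(\bar{Y},\bar{U})\in\partial\varphi$ a.e.\ and $\partial\varphi$ is monotone, this term is nonnegative and may be discarded; this is the only place where the multivalued structure is used, and it is precisely what allows the argument to proceed as in the single-valued delayed case.

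The core of the estimate is the control of $2\langle\delta Y,\Delta F\rangle$. I would split $\Delta F$ into three pieces: the source term $F(s,Y,Z,Y_{s},Z_{s})-\bar{F}(s,Y,Z,Y_{s},Z_{s})$, which becomes the driving term on the right of the claimed inequality; the present increment $\bar{F}(s,Y,Z,Y_{s},Z_{s})-\bar{F}(s,\bar{Y},\bar{Z},Y_{s},Z_{s})$, bounded by $L(|\delta Y|+|\delta Z|)$ through (A$_{1}$-ii); and the delayed increment $\bar{F}(s,\bar{Y},\bar{Z},Y_{s},Z_{s})-\bar{F}(s,\bar{Y},\bar{Z},\bar{Y}_{s},\bar{Z}_{s})$, bounded in $L^{2}$ by $K\int_{-T}^{0}(|\delta Y(s+\theta)|^{2}+|\delta Z(s+\theta)|^{2})\,\alpha(d\theta)$ through (A$_{1}$-iii). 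Each cross term is then treated by Young's inequality $2ab\le\gamma^{-1}a^{2}+\gamma b^{2}$, the constants chosen so that every $|\delta Z|^{2}$ contribution carries a coefficient strictly below the coefficient $1$ of the $|\delta Z|^{2}$ term already present on the left.

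To handle the delayed contributions I would reuse the Fubini and change-of-variables computation displayed in the Remark containing (\ref{remark 1}): writing $u=s+\theta$ and using that $\delta Y(u)=\delta Z(u)=0$ for $u<0$ together with $\alpha$ being a probability measure, one gets $\mathbb{E}\int_{0}^{T}e^{\beta s}\int_{-T}^{0}|\delta Z(s+\theta)|^{2}\alpha(d\theta)\,ds\le e^{\beta T}\,\mathbb{E}\int_{0}^{T}e^{\beta u}|\delta Z(u)|^{2}\,du$, and likewise for $\delta Y$. This is exactly where the factor $e^{\beta T}$ appears, and the hypothesis $Ke^{\beta T}<2L^{2}$ is what guarantees that the delayed $|\delta Z|^{2}$ mass can be absorbed on the left while keeping all constants governed by $L$. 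Collecting the $|\delta Y|^{2}$ terms (whose coefficients, being $2L+2L^{2}$ from the present part, $Ke^{\beta T}\gamma^{-1}$ from the delay, and a Young constant from the source, are all controlled by $L$ under the smallness hypothesis), I would fix $\beta=\beta(L)$ large enough that $\beta\,\mathbb{E}\int_{t}^{T}e^{\beta s}|\delta Y|^{2}\,ds$ dominates them. Dropping the weight by $1\le e^{\beta s}\le e^{\beta T}$ then produces the $\mathcal{H}_{T}^{2}$-bounds for $\delta Y$ and $\delta Z$ with the factor $e^{C_{2}T}$, $C_{2}=C_{2}(L)$.

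Finally, to upgrade the $\mathcal{H}_{T}^{2}$-bound on $\delta Y$ to the supremum norm $\mathcal{S}_{T}^{2}$ appearing in the statement, I would return to the pathwise It\^{o} identity before taking expectation, discard the pointwise-nonnegative term $\int_{t}^{T}\langle\delta Y,\delta U\rangle\,ds$ by monotonicity, take the supremum over $t\in[0,T]$, and estimate the martingale part by Burkholder--Davis--Gundy: $\mathbb{E}\sup_{t}|\int_{t}^{T}\langle\delta Y,\delta Z\,dW\rangle|\le C\,\mathbb{E}(\int_{0}^{T}|\delta Y|^{2}|\delta Z|^{2}\,ds)^{1/2}$, which by Young's inequality is at most $\tfrac12\,\mathbb{E}\sup_{t}|\delta Y|^{2}+C\,\mathbb{E}\int_{0}^{T}|\delta Z|^{2}\,ds$; the first term is absorbed on the left and the second is already controlled. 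I expect the main obstacle to be the careful bookkeeping of the interplay between the exponential weight and the delay through the Fubini step, so that the absorption of the $|\delta Z|^{2}$ terms is legitimate under $Ke^{\beta T}<2L^{2}$ and the final constants $C_{1},C_{2}$ genuinely depend on $L$ only; by contrast, the monotonicity that eliminates $\delta U$ is conceptually essential but technically immediate.
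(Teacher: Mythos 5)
Your proposal is correct and follows essentially the same route as the paper's own proof: It\^{o}'s formula applied to $e^{\beta s}|\delta Y(s)|^{2}$, monotonicity of $\partial\varphi$ to discard the $\langle\delta Y,\delta U\rangle$ term, the same three-way splitting of the generator difference handled by Young's inequality together with the Fubini/change-of-variables bound producing the factor $e^{\beta T}$, absorption under $Ke^{\beta T}<2L^{2}$, and Burkholder--Davis--Gundy plus Young to upgrade to the $\mathcal{S}_{T}^{2,m}$ norm. One cosmetic remark: under the paper's convention $Y(t)=Y(0)$ for $t<0$ one has $\delta Y(u)=\delta Y(0)$ rather than $0$ for $u<0$, but the paper's computation (\ref{uniq 1_calculus}) silently drops this negative-time contribution in exactly the same way, so your argument matches the published one.
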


\begin{proof}
We define first, for $\forall t\leq T$,%
\[
\Delta Y\left(  t\right)  :=Y\left(  t\right)  -\bar{Y}\left(
t\right) ~,\;\Delta Z\left(  t\right)  :=Z\left(  t\right)
-\bar{Z}\left(  t\right) ~,\;\Delta U\left(  t\right)  :=U\left(
t\right)  -\bar{U}\left(  t\right)
~,\;\Delta\xi:=\xi-\bar{\xi}%
\]
and%
\[
\Delta F(t,Y(t),Z(t),Y_{t},Z_{t}):=F(t,Y(t),Z(t),Y_{t},Z_{t})-\bar
{F}(t,Y(t),Z(t),Y_{t},Z_{t}).
\]
Applying It\^{o}'s formula we deduce that%
\begin{equation}%
\begin{array}
[c]{l}%
e^{\beta t}|\Delta Y\left(  t\right)
|^{2}+\displaystyle\int_{t}^{T}\beta e^{\beta s}\left\vert \Delta
Y\left(  s\right)  \right\vert ^{2}ds+2\int _{t}^{T}e^{\beta
s}\left\langle \Delta Y\left(  s\right)  ,\Delta U\left( s\right)
\right\rangle ds+\int_{t}^{T}e^{\beta s}\left\vert \Delta Z\left(
s\right)  \right\vert ^{2}ds\medskip\\
=e^{\beta T}\left\vert \Delta\xi\right\vert ^{2}+2\displaystyle\int_{t}%
^{T}e^{\beta s}\left\langle \Delta Y\left(  s\right)  ,F(s,Y(s),Z(s),Y_{s}%
,Z_{s})-\bar{F}(s,\bar{Y}(s),\bar{Z}(s),\bar{Y}_{s},\bar{Z}_{s})\right\rangle
ds\medskip\\
\quad-2\displaystyle\int_{t}^{T}e^{\beta s}\left\langle \Delta
Y\left( s\right)  ,\Delta Z\left(  s\right)  dW\left(  s\right)
\right\rangle ,
\end{array}
\label{uniq 1}%
\end{equation}
for any $\beta>0$ (which will be chosen later).

Since $\left(  Y\left(  t\right)  ,U\left(  t\right)  \right)
,\left( \bar{Y}\left(  t\right)  ,\bar{U}\left(  t\right)  \right)
\in\partial
\varphi$,%
\[
\left\langle \Delta Y\left(  s\right)  ,\Delta U\left(  s\right)
\right\rangle \geq0.
\]
Using Young's inequality and the assumption on $\bar{F}$, we see
that
\[%
\begin{array}
[c]{l}%
2\displaystyle\int_{t}^{T}e^{\beta s}\left\langle \Delta Y\left(
s\right)
,F(s,Y(s),Z(s),Y_{s},Z_{s})-\bar{F}(s,\bar{Y}(s),\bar{Z}(s),\bar{Y}_{s}%
,\bar{Z}_{s})\right\rangle ds\medskip\\
\leq a\displaystyle\int_{t}^{T}e^{\beta s}\left\vert \Delta Y\left(
s\right) \right\vert ^{2}ds+\tfrac{1}{a}\int_{t}^{T}e^{\beta
s}\left\vert
F(s,Y(s),Z(s),Y_{s},Z_{s})-\bar{F}(s,\bar{Y}(s),\bar{Z}(s),\bar{Y}_{s},\bar
{Z}_{s})\right\vert ^{2}ds\medskip\\
\leq a\displaystyle\int_{t}^{T}e^{\beta s}\left\vert \Delta Y\left(
s\right) \right\vert ^{2}ds+\tfrac{3}{a}\int_{t}^{T}e^{\beta
s}\left\vert \Delta
F(t,Y(t),Z(t),Y_{t},Z_{t})\right\vert ^{2}ds\medskip\\
\quad+\frac{3}{a}\displaystyle\int_{t}^{T}e^{\beta s}\left\vert \bar
{F}(s,Y(s),Z(s),Y_{s},Z_{s})-\bar{F}(s,\bar{Y}(s),\bar{Z}(s),Y_{s}%
,Z_{s})\right\vert ^{2}ds\medskip\\
\quad+\frac{3}{a}\displaystyle\int_{t}^{T}e^{\beta s}\left\vert \bar{F}%
(s,\bar{Y}(s),\bar{Z}(s),Y_{s},Z_{s})-\bar{F}(s,\bar{Y}(s),\bar{Z}(s),\bar
{Y}_{s},\bar{Z}_{s})\right\vert ^{2}ds\medskip\\
\leq a\displaystyle\int_{t}^{T}e^{\beta s}\left\vert \Delta Y\left(
s\right) \right\vert ^{2}ds+\tfrac{3}{a}\int_{t}^{T}e^{\beta
s}\left\vert \Delta
F(t,Y(t),Z(t),Y_{t},Z_{t})\right\vert ^{2}ds\medskip\\
\quad+\frac{6L^{2}}{a}\displaystyle\int_{t}^{T}e^{\beta
s}\Big(\left\vert \Delta Y\left(  s\right)  \right\vert
^{2}+\left\vert \Delta Z\left(
s\right)  \right\vert ^{2}\Big)ds+\tfrac{3K}{a}\int_{t}^{T}\int_{-T}%
^{0}e^{\beta s}\Big(\left\vert \Delta Y\left(  s+\theta\right)
\right\vert
^{2}+\left\vert \Delta Z\left(  s+\theta\right)  \right\vert ^{2}%
\Big)\alpha\left(  d\theta\right)  ds,
\end{array}
\]
for any $a>0$ (which will be chosen later).

But (see also the proof of (\ref{remark 1}))%
\begin{equation}%
\begin{array}
[c]{l}%
\displaystyle\int_{t}^{T}\int_{-T}^{0}e^{\beta s}\left(  \left\vert
\Delta Y\left(  s+\theta\right)  \right\vert ^{2}+\left\vert \Delta
Z\left( s+\theta\right)  \right\vert ^{2}\right)  \alpha\left(
d\theta\right)
ds\medskip\\
=\displaystyle\int_{-T}^{0}e^{-\beta\theta}\int_{t+\theta}^{T+\theta}e^{\beta
s}\left(  \left\vert \Delta Y\left(  s\right)  \right\vert
^{2}+\left\vert \Delta Z\left(  s\right)  \right\vert ^{2}\right)
ds\alpha\left(
d\theta\right)  \medskip\\
\leq\displaystyle\int_{-T}^{0}e^{-\beta\theta}\int_{0}^{T}e^{\beta
s}\left( \left\vert \Delta Y\left(  s\right)  \right\vert
^{2}+\left\vert \Delta Z\left(  s\right)  \right\vert ^{2}\right)
ds\alpha\left(  d\theta\right)
\medskip\\
\leq\displaystyle e^{\beta T}\int_{0}^{T}e^{\beta s}\left(
\left\vert \Delta Y\left(  s\right)  \right\vert ^{2}+\left\vert
\Delta Z\left(  s\right) \right\vert ^{2}\right)  ds.
\end{array}
\label{uniq 1_calculus}%
\end{equation}
Therefore inequality (\ref{uniq 1}) becomes%
\begin{equation}%
\begin{array}
[c]{l}%
e^{\beta t}|\Delta Y\left(  t\right)  |^{2}+\beta\displaystyle\int_{t}%
^{T}e^{\beta s}\left\vert \Delta Y\left(  s\right)  \right\vert ^{2}%
ds+\int_{t}^{T}e^{\beta s}\left\vert \Delta Z\left(  s\right)
\right\vert
^{2}ds\medskip\\
\leq e^{\beta T}\left\vert \Delta\xi\right\vert ^{2}+\frac{3}{a}%
\displaystyle\int_{t}^{T}e^{\beta s}\left\vert \Delta F(t,Y(t),Z(t),Y_{t}%
,Z_{t})\right\vert ^{2}ds\medskip\\
\quad+\left(  a+\frac{6L^{2}}{a}\right)
\displaystyle\int_{t}^{T}e^{\beta
s}\left\vert \Delta Y\left(  s\right)  \right\vert ^{2}ds+\tfrac{6L^{2}}%
{a}\int_{t}^{T}e^{\beta s}\left\vert \Delta Z\left(  s\right)
\right\vert
^{2}ds\medskip\\
\quad+\frac{3Ke^{\beta T}}{a}\displaystyle\int_{0}^{T}e^{\beta
s}\left( \left\vert \Delta Y\left(  s\right)  \right\vert
^{2}+\left\vert \Delta Z\left(  s\right)  \right\vert ^{2}\right)
ds-2\int_{t}^{T}e^{\beta s}\left\langle \Delta Y\left(  s\right)
,\Delta Z\left(  s\right)  dW\left( s\right)  \right\rangle .
\end{array}
\label{uniq 2}%
\end{equation}
For $t=0$ we see that%
\[%
\begin{array}
[c]{l}%
\beta\displaystyle\mathbb{E}\int_{0}^{T}e^{\beta s}\left\vert \Delta
Y\left( s\right)  \right\vert ^{2}ds+\mathbb{E}\int_{0}^{T}e^{\beta
s}\left\vert
\Delta Z\left(  s\right)  \right\vert ^{2}ds\medskip\\
\leq\mathbb{E}\left(  e^{\beta T}\left\vert \Delta\xi\right\vert
^{2}\right) +\frac{3}{a}\displaystyle\mathbb{E}\int_{0}^{T}e^{\beta
s}\left\vert \Delta
F(t,Y(t),Z(t),Y_{t},Z_{t})\right\vert ^{2}ds\medskip\\
\quad+\left(  a+\frac{6L^{2}}{a}+\frac{3Ke^{\beta T}}{a}\right)
\displaystyle\mathbb{E}\int_{0}^{T}e^{\beta s}\left\vert \Delta
Y\left(
s\right)  \right\vert ^{2}ds+\left(  \tfrac{6L^{2}}{a}+\tfrac{3Ke^{\beta T}%
}{a}\right)  \mathbb{E}\int_{0}^{T}e^{\beta s}\left\vert \Delta
Z\left( s\right)  \right\vert ^{2}ds
\end{array}
\]
and, choosing $a=24L^{2}$, $\beta\geq24L^{2}+1$ and $K$ and $T$
sufficiently
small such that%
\[
Ke^{\beta T}<2L^{2},
\]
we deduce that%
\begin{equation}%
\begin{array}
[c]{l}%
\frac{1}{2}\displaystyle\mathbb{E}\int_{0}^{T}e^{\beta s}\left\vert
\Delta
Y\left(  s\right)  \right\vert ^{2}ds+\tfrac{1}{2}\mathbb{E}\int_{0}%
^{T}e^{\beta s}\left\vert \Delta Z\left(  s\right)  \right\vert ^{2}%
ds\medskip\\
\leq\max\left\{  1,\frac{1}{8L^{2}}\right\}  \mathbb{E}\left[
e^{\beta T}\left\vert \Delta\xi\right\vert
^{2}+\displaystyle\int_{0}^{T}e^{\beta s}\left\vert \Delta
F(t,Y(t),Z(t),Y_{t},Z_{t})\right\vert ^{2}ds\right]  .
\end{array}
\label{uniq 3}%
\end{equation}
Applying Burkholder--Davis--Gundy's inequality and once again
Young's inequality we can assert that
\[%
\begin{array}
[c]{l}%
2\mathbb{E}\Big[\sup\limits_{t\in\left[  0,T\right]
}\Big|\displaystyle\int _{t}^{T}\left\langle \Delta Y\left(
s\right)  ,\Delta Z\left(  s\right) dW\left(  s\right)
\right\rangle \Big|\Big]\leq4\mathbb{E}\Big[\sup \limits_{t\in\left[
0,T\right]  }\Big|\displaystyle\int_{0}^{t}\left\langle \Delta
Y\left(  s\right)  ,\Delta Z\left(  s\right)  dW\left(  s\right)
\right\rangle \Big|\Big]\medskip\\
\leq12\mathbb{E}\Big[\displaystyle\int_{0}^{T}\left\vert \Delta
Y\left( s\right)  \right\vert ^{2}\left\vert \Delta Z\left(
s\right)  \right\vert
^{2}ds\Big]^{1/2}\leq\frac{1}{2}\mathbb{E}\Big[\sup\limits_{t\in\left[
0,T\right]  }|\Delta Y(t)|^{2}\Big]+72\mathbb{E}\displaystyle\int_{0}%
^{T}\left\vert \Delta Z\left(  s\right)  \right\vert ^{2}ds,
\end{array}
\]
hence inequality (\ref{uniq 2}) gives%
\[%
\begin{array}
[c]{l}%
\mathbb{E}\Big[\sup\limits_{t\in\left[  0,T\right]  }e^{\beta
t}|\Delta
Y\left(  t\right)  |^{2}\Big]\medskip\\
\leq C\max\left\{  1,\frac{1}{8L^{2}}\right\}  \mathbb{E}\left[
e^{\beta T}\left\vert \Delta\xi\right\vert
^{2}+\displaystyle\int_{0}^{T}e^{\beta s}\left\vert \Delta
F(t,Y(t),Z(t),Y_{t},Z_{t})\right\vert ^{2}ds\right]
\end{array}
\]
for some $C>0$ independent of $L,K$ and $T$.\hfill
\end{proof}

The main result of this section is given by

\begin{theorem}
\label{main result}Let assumptions \textrm{(A}$_{\mathrm{1}}\mathrm{-}%
$\textrm{A}$_{\mathrm{3}}$\textrm{)} be satisfied. If time horizon
$T$ and Lipschitz constant $K$ are small enough, then there exists a
unique solution $\left(  Y,Z,U\right)  $ of (\ref{BSDE time delay}).
\end{theorem}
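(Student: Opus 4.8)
The plan is to solve (\ref{BSDE time delay}) by a Banach fixed point argument that decouples the delay from the multivalued term, so that each iteration is a backward stochastic variational inequality \emph{without} delay of the type solved by Pardoux and R\u{a}\c{s}canu in \cite{pa-ra/98}. First I would introduce a map $\Gamma$ on the space $\mathcal{B}:=\mathcal{S}_{T}^{2,m}\times\mathcal{H}_{T}^{2,m\times d}$ as follows: given $(P,Q)\in\mathcal{B}$, freeze the delayed arguments and set $g(s,y,z):=F(s,y,z,P_{s},Q_{s})$. By (A$_{1}$-ii) this $g$ is $L$-Lipschitz in $(y,z)$, it is progressively measurable by (A$_{1}$-i), and $\mathbb{E}\int_{0}^{T}|g(s,0,0)|^{2}ds<\infty$ by (A$_{1}$-iii)--(A$_{1}$-iv) together with the integrability bound (\ref{remark 1}). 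Since $\varphi$ satisfies (A$_{2}$) and $\xi$ satisfies (A$_{3}$), the no-delay BSVI
\[
Y(t)+\int_{t}^{T}U(s)\,ds=\xi+\int_{t}^{T}g(s,Y(s),Z(s))\,ds-\int_{t}^{T}Z(s)\,dW(s)
\]
has a unique solution $(Y,Z,U)\in\mathcal{S}_{T}^{2,m}\times\mathcal{H}_{T}^{2,m\times d}\times\mathcal{H}_{T}^{2,m}$ by \cite{pa-ra/98}. I define $\Gamma(P,Q):=(Y,Z)$, so that a fixed point of $\Gamma$ is exactly a solution of (\ref{BSDE time delay}).

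Next I would show that $\Gamma$ is a strict contraction once $K$ and $T$ are small. Taking $(P,Q),(\bar{P},\bar{Q})\in\mathcal{B}$ with images $(Y,Z,U),(\bar{Y},\bar{Z},\bar{U})$, I apply It\^{o}'s formula to $e^{\beta t}|Y(t)-\bar{Y}(t)|^{2}$ exactly as in the proof of Proposition \ref{uniqueness}. The subdifferential term contributes $\langle Y(s)-\bar{Y}(s),U(s)-\bar{U}(s)\rangle\geq0$ by maximal monotonicity and is discarded. The generator difference $F(s,Y,Z,P_{s},Q_{s})-F(s,\bar{Y},\bar{Z},\bar{P}_{s},\bar{Q}_{s})$ splits, via (A$_{1}$-ii) and (A$_{1}$-iii), into an $L$-Lipschitz part in $(Y-\bar{Y},Z-\bar{Z})$ and a delay part bounded by $K\int_{-T}^{0}(|(P-\bar{P})(s+\theta)|^{2}+|(Q-\bar{Q})(s+\theta)|^{2})\alpha(d\theta)$; the change of variables and Fubini computation (\ref{uniq 1_calculus}) turns the latter into a genuine $\mathcal{H}_{T}^{2}$-norm of $P-\bar{P}$ and $Q-\bar{Q}$ carrying the factor $e^{\beta T}$. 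Choosing $a=24L^{2}$ and $\beta\geq24L^{2}+1$ as in the Proposition, and then $K,T$ small so that the resulting factor $C\,Ke^{\beta T}<1$, yields
\[
\|\Gamma(P,Q)-\Gamma(\bar{P},\bar{Q})\|_{\mathcal{B}}^{2}\leq C\,Ke^{\beta T}\big(\|P-\bar{P}\|_{\mathcal{H}_{T}^{2,m}}^{2}+\|Q-\bar{Q}\|_{\mathcal{H}_{T}^{2,m\times d}}^{2}\big),
\]
so the Banach fixed point theorem provides a unique fixed point and hence existence. Uniqueness is then immediate from Proposition \ref{uniqueness}: applied with $\bar{\xi}=\xi$ and $\bar{F}=F$ its right-hand side vanishes, forcing $Y=\bar{Y}$ and $Z=\bar{Z}$, and then $U=\bar{U}$ from the integral identity (\ref{def sol}-iv).

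The step I expect to be the main obstacle is the contraction estimate, and within it the bookkeeping of the time-shifted delay inputs: because $(P_{s},Q_{s})$ enters the frozen generator at shifted times, one must pass through (\ref{uniq 1_calculus}) to re-express the delay contribution as an $\mathcal{H}_{T}^{2}$-norm and to extract the $e^{\beta T}$ factor, and only then does the smallness hypothesis $Ke^{\beta T}<2L^{2}$ translate into a contraction constant strictly below one. A secondary technical point, to be checked rather than fought, is that the frozen-coefficient problem genuinely fits the framework of \cite{pa-ra/98}---in particular that $s\mapsto F(s,0,0,P_{s},Q_{s})$ is progressively measurable and square-integrable---so that each iteration lands in $\mathcal{B}$.

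As an alternative I could instead penalize $\partial\varphi$ by its Yosida approximation $\nabla\varphi_{\varepsilon}$, solve the resulting time-delayed \emph{Lipschitz} BSDE by the theorem of Delong and Imkeller in \cite{de-im/10}, and pass to the limit $\varepsilon\rightarrow0$; the monotonicity of $\nabla\varphi_{\varepsilon}$ keeps all a priori bounds uniform in $\varepsilon$, but identifying the limit of $U^{\varepsilon}=\nabla\varphi_{\varepsilon}(Y^{\varepsilon})$ as an element of $\partial\varphi(Y)$ requires the usual demiclosedness argument for maximal monotone operators, which is why the direct fixed-point reduction to \cite{pa-ra/98} looks preferable here.
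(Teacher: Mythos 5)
Your proposal is correct, but it takes a genuinely different route from the paper --- in fact, the ``alternative'' you dismiss in your last paragraph \emph{is} the paper's proof. The paper regularizes $\partial\varphi$ by its Yosida approximation $\nabla\varphi_{\epsilon}$, solves the penalized \emph{delayed} Lipschitz BSDE by Theorem 2.1 of \cite{de-im/10} (extended to the multidimensional case and to generators depending also on the current values $(Y(t),Z(t))$), derives bounds uniform in $\epsilon$ on $Y^{\epsilon}$, $Z^{\epsilon}$, $\nabla\varphi_{\epsilon}(Y^{\epsilon})$ and $\varphi(J_{\epsilon}Y^{\epsilon})$, shows that $(Y^{\epsilon},Z^{\epsilon})$ is Cauchy at rate $\epsilon+\delta$, and identifies the weak $\mathcal{H}_{T}^{2,m}$-limit $U$ of $\nabla\varphi_{\epsilon}(Y^{\epsilon})$ as a section of $\partial\varphi(Y)$ by exactly the demiclosedness argument you anticipate. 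Your primary route --- freeze the delayed arguments, solve each frozen problem as a no-delay BSVI via \cite{pa-ra/98}, and close with Banach's fixed point theorem --- is the strategy used by Zhou and Ren \cite{zh-re/12} in the reflected case, with the solver of El Karoui et al.\ replaced by that of Pardoux--R\u{a}\c{s}canu; it is shorter and more modular, since properties (\ref{def sol}$-ii,iii$) and the process $U$ are inherited from \cite{pa-ra/98} at the fixed point, and the smallness of $K$ and $T$ enters transparently as the contraction constant. What the paper's longer route buys is independence from \cite{pa-ra/98} as a black box, together with explicit a priori estimates ((\ref{prop 1}), (\ref{prop 2})) on $\mathbb{E}\int_{0}^{T}\varphi(Y)$ and on $\|U\|_{\mathcal{H}_{T}^{2,m}}$ in terms of the data, which are of independent interest. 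One minor repair to your contraction step: because of the convention $Y(t)=Y(0)$ for $t<0$, the difference $\Delta P(s+\theta)$ does \emph{not} vanish when $s+\theta<0$, so the change-of-variables computation yields, besides the $\mathcal{H}_{T}^{2}$-norm with the factor $e^{\beta T}$, an extra term of order $KTe^{\beta T}\,\mathbb{E}|\Delta P(0)|^{2}\leq KTe^{\beta T}\|\Delta P\|_{\mathcal{S}_{T}^{2,m}}^{2}$; since your space $\mathcal{B}$ carries the $\mathcal{S}_{T}^{2,m}$-norm in the first component this is harmless for the contraction, but the bound should be stated accordingly (the paper's own display (\ref{uniq 1_calculus}) silently glosses over the same point).
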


\section{Proof of the main result}

In order to prove the existence of the solution for (\ref{BSDE time
delay}) we shall consider the Yosida approximation of the
multivalued operator $\partial\varphi$. Set, for $\epsilon>0$, the
convex function $\varphi
_{\epsilon}$ of class $C^{1}$%
\[
\varphi_{\epsilon}(y):=\inf\{\frac{1}{2\epsilon}|y-v|^{2}+\varphi
(v):v\in\mathbb{R}^{m}\}
\]
with the gradient being a $1/\epsilon$-Lipschitz function.

If $J_{\epsilon}y:=y-\epsilon\nabla\varphi_{\epsilon}(y)$ then we
can deduce
the following properties (see \cite{br/73}):%
\begin{equation}%
\begin{array}
[c]{rl}%
\left(  i\right)  &
\varphi_{\epsilon}(y)=\frac{1}{2\epsilon}|y-J_{\epsilon
}y|^{2}+\varphi(J_{\epsilon}y),\medskip\\
\left(  ii\right)  & \varphi_{\epsilon}(y)\leq\varphi(y)\medskip\\
\left(  iii\right)  & \left\vert
J_{\epsilon}y-J_{\epsilon}\bar{y}\right\vert
\leq|y-\bar{y}|,\medskip\\
\left(  iv\right)  & \nabla\varphi_{\epsilon}(y)\in\partial\varphi
(J_{\epsilon}y),\medskip\\
\left(  v\right)  & 0\leq\varphi_{\epsilon}(y)\leq\left\langle
y,\nabla
\varphi_{\epsilon}(y)\right\rangle ,\medskip\\
\left(  vi\right)  & \left\langle \nabla\varphi_{\epsilon}(y)-\nabla
\varphi_{\delta}(\bar{y}),y-\bar{y}\right\rangle
\geq-(\epsilon+\delta )\left\langle
\nabla\varphi_{\epsilon}(y),\nabla\varphi_{\delta}(\bar
{y})\right\rangle ,
\end{array}
\label{properties}%
\end{equation}
for all $\epsilon,\delta>0$, $y,\bar{y}\in\mathbb{R}^{m}.$

\begin{proof}
[Proof of Theorem \ref{main result}]The uniqueness is a immediate
consequence of Proposition \ref{uniqueness}.

Let now $\epsilon>0$. We consider the approximating BSDE with time
delayed
generator:%
\begin{equation}%
\begin{array}
[c]{r}%
Y^{\epsilon}\left(  t\right)
+\displaystyle\int_{t}^{T}\nabla\varphi
_{\epsilon}\left(  Y^{\epsilon}\left(  s\right)  \right)  ds=\xi+\int_{t}%
^{T}F\left(  s,Y^{\epsilon}\left(  s\right)  ,Z^{\epsilon}\left(
s\right) ,Y_{s}^{\epsilon},Z_{s}^{\epsilon}\right)
ds-\int_{t}^{T}Z^{\epsilon}\left(
s\right)  dW\left(  s\right)  ,\medskip\\
0\leq t\leq T,\;\mathbb{P}\text{-a.s.}%
\end{array}
\label{BSDE time delay 3}%
\end{equation}
Since
$\nabla\varphi_{\epsilon}:\mathbb{R}^{m}\rightarrow\mathbb{R}^{m}$
is
$1/\epsilon$-Lipschitz function, we can apply Theorem 2.1 from \cite{de-im/10}%
. Hence there exists a unique solution $\left(
Y^{\epsilon},Z^{\epsilon }\right)
\in\mathcal{S}_{T}^{2,m}\times\mathcal{H}_{T}^{2,m\times d}$. We
mention that the conclusion of Theorem 2.1 in \cite{de-im/10} holds
true even in the multidimensional case ($m,d\geq2$). In addition,
since the generator $F$ is Lipschitz continuous in the variable
$Y\left(  t\right)  $ and $Z\left(  t\right)  $, the proof of
Theorem 2.1 can be easily change such that to allow to $F$ to depend
on the variable $Y\left(  t\right)  $ and $Z\left( t\right)  $.
Also, it can be see from the proof that the Lipschitz constant $L$
can be chosen arbitrary.

The proof of the existence will be split into several steps which
are adapted from the proof of Theorem 1.1 from
\cite{pa-ra/98}.\medskip

\noindent\textrm{A.} \textit{Boundedness of }$Y^{\epsilon}$
\textit{and }$Z^{\epsilon}\smallskip$

We will first show the inequality%
\begin{equation}
\mathbb{E}\Big[\sup\limits_{t\in\left[  0,T\right]  }e^{\beta
t}|Y^{\epsilon }\left(  t\right)
|^{2}\Big]+\mathbb{E}\int_{0}^{T}e^{\beta s}\left\vert
Z^{\epsilon}\left(  s\right)  \right\vert ^{2}ds\leq C_{1}e^{C_{2}T}%
M_{1}~,\label{prop 1}%
\end{equation}
for some constants $C_{1}=C_{1}\left(  L\right)  >0$ and
$C_{2}=C_{2}\left( L\right)  >0$, independent of $K$, $T$ and
$\epsilon$, and for any $\beta>0$
sufficiently large, where%
\[
M_{1}:=\mathbb{E}\Big[\left\vert \xi\right\vert
^{2}+\int_{0}^{T}e^{\beta s}\left\vert F(t,0,0,0,0)\right\vert
^{2}ds\Big].
\]
Indeed, from It\^{o}'s formula we have, for $\beta>0$ arbitrarily chosen,%
\begin{equation}%
\begin{array}
[c]{l}%
e^{\beta t}|Y^{\epsilon}\left(  t\right)
|^{2}+\displaystyle\int_{t}^{T}\beta
e^{\beta s}\left\vert Y^{\epsilon}\left(  s\right)  \right\vert ^{2}%
ds+2\int_{t}^{T}e^{\beta s}\left\langle Y^{\epsilon}\left(  s\right)
,\nabla\varphi_{\epsilon}\left(  Y^{\epsilon}\left(  s\right)
\right) \right\rangle ds+\int_{t}^{T}e^{\beta s}\left\vert
Z^{\epsilon}\left(
s\right)  \right\vert ^{2}ds\medskip\\
=e^{\beta T}\left\vert \xi\right\vert
^{2}+2\displaystyle\int_{t}^{T}e^{\beta s}\left\langle
Y^{\epsilon}\left(  s\right)  ,F\left(  s,Y^{\epsilon}\left(
s\right)  ,Z^{\epsilon}\left(  s\right)
,Y_{s}^{\epsilon},Z_{s}^{\epsilon
}\right)  \right\rangle ds\\
\quad-2\displaystyle\int_{t}^{T}e^{\beta s}\left\langle
Y^{\epsilon}\left( s\right)  ,Z^{\epsilon}\left(  s\right)  dW\left(
s\right)  \right\rangle .
\end{array}
\label{prop 1.1}%
\end{equation}
From (\ref{properties}$-v$)%
\[
\left\langle Y^{\epsilon}\left(  s\right)
,\nabla\varphi_{\epsilon}\left( Y^{\epsilon}\left(  s\right)
\right)  \right\rangle \geq0.
\]
Using Young's inequality and the assumption on $F$, we obtain (see
also the
calculus in (\ref{uniq 1_calculus})), for $a>0$ arbitrarily chosen,%
\[%
\begin{array}
[c]{l}%
2\displaystyle\int_{t}^{T}e^{\beta s}\left\langle Y^{\epsilon}\left(
s\right)  ,F\left(  s,Y^{\epsilon}\left(  s\right)
,Z^{\epsilon}\left( s\right)
,Y_{s}^{\epsilon},Z_{s}^{\epsilon}\right)  \right\rangle
ds\medskip\\
\leq\left(  a+\frac{6L^{2}}{a}\right)
\displaystyle\int_{t}^{T}e^{\beta
s}\left\vert Y^{\epsilon}\left(  s\right)  \right\vert ^{2}ds+\tfrac{3}{a}%
\int_{t}^{T}e^{\beta s}\left\vert F(t,0,0,0,0)\right\vert ^{2}ds\medskip\\
\quad+\frac{6L^{2}}{a}\displaystyle\int_{t}^{T}e^{\beta s}\left\vert
Z^{\epsilon}\left(  s\right)  \right\vert ^{2}ds+\tfrac{3Ke^{\beta T}}{a}%
\int_{0}^{T}e^{\beta s}\left(  \left\vert Y^{\epsilon}\left(
s\right) \right\vert ^{2}+\left\vert Z^{\epsilon}\left(  s\right)
\right\vert ^{2}\right)  ds.
\end{array}
\]
Therefore inequality (\ref{prop 1.1}) becomes%
\begin{equation}%
\begin{array}
[c]{l}%
e^{\beta t}|Y^{\epsilon}\left(  t\right)
|^{2}+\displaystyle\int_{t}^{T}\beta
e^{\beta s}\left\vert Y^{\epsilon}\left(  s\right)  \right\vert ^{2}%
ds+\int_{t}^{T}e^{\beta s}\left\vert Z^{\epsilon}\left(  s\right)
\right\vert
^{2}ds\medskip\\
\leq e^{\beta T}\left\vert \xi\right\vert
^{2}+\tfrac{3}{a}\displaystyle\int _{t}^{T}e^{\beta s}\left\vert
F(t,0,0,0,0)\right\vert ^{2}ds+\left( a+\tfrac{6L^{2}}{a}\right)
\int_{t}^{T}e^{\beta s}\left\vert Y^{\epsilon
}\left(  s\right)  \right\vert ^{2}ds\medskip\\
\quad+\frac{6L^{2}}{a}\displaystyle\int_{t}^{T}e^{\beta s}\left\vert
Z^{\epsilon}\left(  s\right)  \right\vert ^{2}ds+\tfrac{3Ke^{\beta T}}{a}%
\int_{0}^{T}e^{\beta s}\left(  \left\vert Y^{\epsilon}\left(
s\right) \right\vert ^{2}+\left\vert Z^{\epsilon}\left(  s\right)
\right\vert
^{2}\right)  ds\medskip\\
\quad-2\displaystyle\int_{t}^{T}e^{\beta s}\left\langle
Y^{\epsilon}\left( s\right)  ,Z^{\epsilon}\left(  s\right)  dW\left(
s\right)  \right\rangle
\end{array}
\label{prop 1.2}%
\end{equation}
and for $t=0$ it follows that%
\[%
\begin{array}
[c]{l}%
\displaystyle\Big(\beta-a-\frac{6L^{2}}{a}-\tfrac{3Ke^{\beta T}}%
{a}\Big)\mathbb{E}\int_{0}^{T}e^{\beta s}\left\vert
Y^{\epsilon}\left(
s\right)  \right\vert ^{2}ds+\Big(1-\frac{6L^{2}}{a}-\tfrac{3Ke^{\beta T}}%
{a}\Big)\mathbb{E}\int_{0}^{T}e^{\beta s}\left\vert
Z^{\epsilon}\left(
s\right)  \right\vert ^{2}ds\medskip\\
\leq\displaystyle\mathbb{E}\big(e^{\beta T}\left\vert \xi\right\vert
^{2}\big)+\frac{3}{a}\mathbb{E}\int_{t}^{T}e^{\beta s}\left\vert
F(t,0,0,0,0)\right\vert ^{2}ds.
\end{array}
\]
Choosing again $a=24L^{2}$, $\beta\geq24L^{2}+1$ and $K$ and $T$
sufficiently
small such that%
\begin{equation}
Ke^{\beta T}<6L^{2},\label{K,T small}%
\end{equation}
we see that%
\begin{equation}%
\begin{array}
[c]{l}%
\displaystyle\frac{1}{2}\mathbb{E}\int_{0}^{T}e^{\beta s}\left\vert
Y^{\epsilon}\left(  s\right)  \right\vert
^{2}ds+\frac{1}{2}\mathbb{E}\int _{0}^{T}e^{\beta s}\left\vert
Z^{\epsilon}\left(  s\right)  \right\vert
^{2}ds\medskip\\
\leq\max\left\{  1,\frac{1}{8L^{2}}\right\}  \mathbb{E}\Big[e^{\beta
T}\left\vert \xi\right\vert ^{2}+\displaystyle\int_{t}^{T}e^{\beta
s}\left\vert F(t,0,0,0,0)\right\vert ^{2}ds\Big].
\end{array}
\label{prop 1.3}%
\end{equation}
We apply Burkholder--Davis--Gundy's inequality and we deduce the
following
inequality:%
\[%
\begin{array}
[c]{l}%
\displaystyle2\mathbb{E}\Big[\sup\limits_{t\in\left[  0,T\right]  }%
\Big|\int_{t}^{T}e^{\beta s}\left\langle Y^{\epsilon}\left(
s\right) ,Z^{\epsilon}\left(  s\right)  dW\left(  s\right)
\right\rangle \Big|\Big]\leq12\mathbb{E}\Big[\int_{0}^{T}e^{\beta
s}\left\vert Y^{\epsilon }\left(  s\right)  \right\vert
^{2}\left\vert Z^{\epsilon}\left(  s\right)
\right\vert ^{2}ds\Big]^{1/2}\medskip\\
\leq\displaystyle\frac{1}{2}\mathbb{E}\Big[\sup\limits_{t\in\left[
0,T\right]  }e^{\beta t}|Y^{\epsilon}(t)|^{2}\Big]+72\mathbb{E}\int_{0}%
^{T}e^{\beta s}\left\vert Z^{\epsilon}\left(  s\right)  \right\vert
^{2}ds.
\end{array}
\]
Hence, taking $\sup$ in (\ref{prop 1.2}) we deduce inequality (\ref{prop 1}%
).$\smallskip$

\noindent\textrm{B.} \textit{Boundedness of
}$\nabla\varphi_{\epsilon }(Y^{\epsilon})\smallskip$

We will prove that there exists some constants $C_{1}=C_{1}\left(
L\right)
>0$ and $C_{2}=C_{2}\left(  L\right)  >0$, independent of $K$, $T$ and
$\epsilon$, such that%
\begin{equation}%
\begin{array}
[c]{cl}%
\left(  a\right)   & \mathbb{E}\displaystyle\int_{0}^{T}e^{\beta s}%
|\nabla\varphi_{\epsilon}\left(  Y^{\epsilon}\left(  s\right)
\right)
|^{2}ds\leq C_{1}e^{C_{2}T}M_{2}~,\medskip\\
\left(  b\right)   & \displaystyle\mathbb{E}\left[  e^{\beta
t}\varphi\left( J_{\epsilon}\left(  Y^{\epsilon}\left(  t\right)
\right)  \right)  \right] +\mathbb{E}\int_{0}^{T}e^{\beta
s}\varphi\left(  J_{\epsilon}\left(
Y^{\epsilon}\left(  s\right)  \right)  \right)  ds\leq C_{1}e^{C_{2}T}%
M_{2}~,\medskip\\
\left(  c\right)   & \mathbb{E}\left[  e^{\beta
t}|Y^{\epsilon}\left( t\right)  -J_{\epsilon}\left(
Y^{\epsilon}\left(  t\right)  \right) |^{2}\right]
\leq\epsilon~C_{1}e^{C_{2}T}M_{2}~,\;\forall t\in\left[ 0,T\right]
,
\end{array}
\label{prop 2}%
\end{equation}
for any $\beta>0$ sufficiently large, where%
\[
M_{2}:=\mathbb{E}\Big[\left\vert \xi\right\vert ^{2}+\varphi\left(
\xi\right)  +\int_{0}^{T}\left\vert F(t,0,0,0,0)\right\vert
^{2}ds\Big].
\]

Essential for the proof of this part is stochastic subdifferential
inequality
(2.8) from \cite{pa-ra/98}:%
\[
e^{\beta T}\varphi_{\epsilon}\left(  \xi\right)  \geq e^{\beta t}%
\varphi_{\epsilon}\left(  Y^{\epsilon}\left(  t\right)  \right)  +\int_{t}%
^{T}e^{\beta s}\left\langle \nabla\varphi_{\epsilon}\left(
Y^{\epsilon
}\left(  s\right)  \right)  ,dY_{s}^{\epsilon}\right\rangle +\int_{t}%
^{T}\nabla\varphi_{\epsilon}\left(  Y^{\epsilon}\left(  s\right)
\right) d(e^{\beta s}).
\]
Therefore%
\begin{equation}%
\begin{array}
[c]{l}%
e^{\beta t}\varphi_{\epsilon}\left(  Y^{\epsilon}\left(  t\right)
\right) +\beta\displaystyle\int_{t}^{T}e^{\beta
s}\varphi_{\epsilon}\left( Y^{\epsilon}\left(  s\right)  \right)
ds+\int_{t}^{T}e^{\beta s}\left\vert \nabla\varphi_{\epsilon}\left(
Y^{\epsilon}\left(  s\right)  \right) \right\vert ^{2}ds\leq
e^{\beta T}\varphi_{\epsilon}\left(  \xi\right)
\medskip\\
+\displaystyle\int_{t}^{T}e^{\beta s}\left\langle
\nabla\varphi_{\epsilon }\left(  Y^{\epsilon}\left(  s\right)
\right)  ,F\left(  s,Y^{\epsilon
}\left(  s\right)  ,Z^{\epsilon}\left(  s\right)  ,Y_{s}^{\epsilon}%
,Z_{s}^{\epsilon}\right)  \right\rangle
ds-\displaystyle\int_{t}^{T}e^{\beta s}\left\langle
\nabla\varphi_{\epsilon}\left(  Y^{\epsilon}\left(  s\right) \right)
,Z^{\epsilon}\left(  s\right)  dW\left(  s\right)  \right\rangle .
\end{array}
\label{prop 2.1}%
\end{equation}
To obtain (\ref{prop 2}$-a$) it is sufficient to use (\ref{prop 1}),
inequality%
\[%
\begin{array}
[c]{l}%
\displaystyle\int_{t}^{T}e^{\beta s}\left\langle
\nabla\varphi_{\epsilon }\left(  Y^{\epsilon}\left(  s\right)
\right)  ,F\left(  s,Y^{\epsilon
}\left(  s\right)  ,Z^{\epsilon}\left(  s\right)  ,Y_{s}^{\epsilon}%
,Z_{s}^{\epsilon}\right)  \right\rangle ds\medskip\\
\leq\frac{1}{2}\displaystyle\int_{t}^{T}e^{\beta s}\left\vert \nabla
\varphi_{\epsilon}\left(  Y^{\epsilon}\left(  s\right)  \right)
\right\vert ^{2}ds+\tfrac{3}{2}\int_{t}^{T}e^{\beta s}\left\vert
F(t,0,0,0,0)\right\vert
^{2}ds\medskip\\
\quad+\frac{3}{2}\left(  2L^{2}+Ke^{\beta T}\right)  \displaystyle\int_{t}%
^{T}e^{\beta s}\left(  \left\vert Y^{\epsilon}\left(  s\right)
\right\vert ^{2}+\left\vert Z^{\epsilon}\left(  s\right)
\right\vert ^{2}\right)  ds,
\end{array}
\]
assumption (\ref{K,T small}) and (\ref{properties}$-ii$).

Using, in addition, inequality (\ref{properties}$-i$) we see that%
\[
\frac{1}{2\epsilon}e^{\beta t}\left\vert Y^{\epsilon}\left(
t\right) -J_{\epsilon}\left(  Y^{\epsilon}\left(  t\right)  \right)
\right\vert +\mathbb{E}\left[  e^{\beta t}\varphi\left(
J_{\epsilon}\left(  Y^{\epsilon }\left(  t\right)  \right)  \right)
\right]  +\beta\int_{t}^{T}e^{\beta s}\varphi\left(
J_{\epsilon}\left(  Y^{\epsilon}\left(  s\right)  \right) \right)
ds\leq C_{1}e^{C_{2}T}M_{2}\,,
\]
which is (\ref{prop 2}$-b,c$).$\smallskip$

\noindent\textrm{C.} \textit{Cauchy sequences and
convergence}$\smallskip$

The next step is to prove that there exists some constants
$C_{1}=C_{1}\left( L\right)  >0$ and $C_{2}=C_{2}\left(  L\right)
>0$, independent of $K$, $T$
and $\epsilon$, such that%
\begin{equation}
\mathbb{E}\Big[\sup\limits_{t\in\left[  0,T\right]  }e^{\beta
t}|Y^{\epsilon }\left(  t\right)  -Y^{\delta}\left(  t\right)
|^{2}\Big]+\mathbb{E}\int _{0}^{T}e^{\beta s}|Z^{\epsilon}\left(
s\right)  -Z^{\delta}\left(  s\right)
|^{2}ds\leq C_{1}e^{C_{2}T}\left(  \epsilon+\delta\right)  M_{2}%
~,\medskip\label{prop 3}%
\end{equation}
for any $\beta>0$ sufficiently large.

Applying It\^{o}'s formula we deduce that%
\begin{equation}%
\begin{array}
[c]{l}%
e^{\beta t}|Y^{\epsilon}\left(  t\right)  -Y^{\delta}\left(
t\right) |^{2}+\beta\displaystyle\int_{t}^{T}e^{\beta
s}|Y^{\epsilon}\left(  s\right) -Y^{\delta}\left(  s\right)
|^{2}ds+\int_{t}^{T}e^{\beta s}|Z^{\epsilon
}\left(  s\right)  -Z^{\delta}\left(  s\right)  |^{2}ds\medskip\\
\quad+2\displaystyle\int_{t}^{T}e^{\beta s}\langle
Y^{\epsilon}\left( s\right)  -Y^{\delta}\left(  s\right)
,\nabla\varphi_{\epsilon}(Y^{\epsilon
}(s))-\nabla\varphi_{\delta}(Y^{\delta}(s))\rangle ds\medskip\\
=2\displaystyle\int_{t}^{T}e^{\beta s}\langle Y^{\epsilon}\left(
s\right) -Y^{\delta}\left(  s\right)  ,F\left(  s,Y^{\epsilon}\left(
s\right) ,Z^{\epsilon}\left(  s\right)
,Y_{s}^{\epsilon},Z_{s}^{\epsilon}\right) -F(s,Y^{\delta}\left(
s\right)  ,Z^{\delta}\left(  s\right)  ,Y_{s}^{\delta
},Z_{s}^{\delta})\rangle ds\medskip\\
\quad-2\displaystyle\int_{t}^{T}e^{\beta s}\langle
Y^{\epsilon}\left( s\right)  -Y^{\delta}\left(  s\right)
,(Z^{\epsilon}\left(  s\right) -Z^{\delta}\left(  s\right)
)dW\left(  s\right)  \rangle.
\end{array}
\label{prop 3.1}%
\end{equation}
Since (\ref{properties}$-vi$),%
\[
\langle Y^{\epsilon}\left(  s\right)  -Y^{\delta}\left(  s\right)
,\nabla\varphi_{\epsilon}(Y^{\epsilon}(s))-\nabla\varphi_{\delta}(Y^{\delta
}(s))\rangle\geq-\left(  \epsilon+\delta\right)  \left\vert \nabla
\varphi_{\epsilon}\left(  Y^{\epsilon}\left(  s\right)  \right)
\right\vert |\nabla\varphi_{\delta}(Y^{\delta}(s))|~.
\]
Using the assumption on $F$ we obtain%
\[%
\begin{array}
[c]{l}%
2\displaystyle\int_{t}^{T}e^{\beta s}\langle Y^{\epsilon}\left(
s\right) -Y^{\delta}\left(  s\right)  ,F\left(  s,Y^{\epsilon}\left(
s\right) ,Z^{\epsilon}\left(  s\right)
,Y_{s}^{\epsilon},Z_{s}^{\epsilon}\right) -F(s,Y^{\delta}\left(
s\right)  ,Z^{\delta}\left(  s\right)  ,Y_{s}^{\delta
},Z_{s}^{\delta})ds\medskip\\
\leq\left(  16L^{2}+\frac{1}{4}\right)
\displaystyle\int_{t}^{T}e^{\beta s}|Y^{\epsilon}\left(  s\right)
-Y^{\delta}\left(  s\right)  |^{2}ds+\frac
{1}{4}\int_{t}^{T}e^{\beta s}|Z^{\epsilon}\left(  s\right)
-Z^{\delta}\left(
s\right)  |^{2}ds\medskip\\
\quad+\tfrac{Ke^{\beta T}}{8L^{2}}\displaystyle\int_{0}^{T}e^{\beta
s}\left(
|Y^{\epsilon}\left(  s\right)  -Y^{\delta}\left(  s\right)  |^{2}%
+|Z^{\epsilon}\left(  s\right)  -Z^{\delta}\left(  s\right)
|^{2}\right)  ds
\end{array}
\]
hence (\ref{prop 3.1}) becomes%
\begin{equation}%
\begin{array}
[c]{l}%
\displaystyle e^{\beta t}|Y^{\epsilon}\left(  t\right)
-Y^{\delta}\left( t\right)
|^{2}+\Big(\beta-16L^{2}-\frac{1}{4}\Big)\int_{t}^{T}e^{\beta
s}|Y^{\epsilon}\left(  s\right)  -Y^{\delta}\left(  s\right)  |^{2}%
ds\medskip\\
\quad+\displaystyle\frac{3}{4}\int_{t}^{T}e^{\beta
s}|Z^{\epsilon}\left(
s\right)  -Z^{\delta}\left(  s\right)  |^{2}ds\medskip\\
\leq2\left(  \epsilon+\delta\right)
\displaystyle\int_{t}^{T}e^{\beta s}\left\vert
\nabla\varphi_{\epsilon}\left(  Y^{\epsilon}\left(  s\right)
\right)  \right\vert |\nabla\varphi_{\delta}(Y^{\delta}(s))|ds\medskip\\
\quad\displaystyle+\tfrac{Ke^{\beta T}}{8L^{2}}\int_{0}^{T}e^{\beta
s}\left(
|Y^{\epsilon}\left(  s\right)  -Y^{\delta}\left(  s\right)  |^{2}%
+|Z^{\epsilon}\left(  s\right)  -Z^{\delta}\left(  s\right)
|^{2}\right)
ds\medskip\\
\quad\displaystyle-2\int_{t}^{T}e^{\beta s}\langle
Y^{\epsilon}\left( s\right)  -Y^{\delta}\left(  s\right)
,(Z^{\epsilon}\left(  s\right) -Z^{\delta}\left(  s\right)
)dW\left(  s\right)  \rangle.
\end{array}
\label{prop 3.2}%
\end{equation}
Using (\ref{prop 2}$-a$) we see that%
\[%
\begin{array}
[c]{l}%
\displaystyle\Big(\beta-16L^{2}-\tfrac{1}{4}-\tfrac{Ke^{\beta T}}{8L^{2}%
}\Big)\mathbb{E}\int_{0}^{T}e^{\beta s}|Y^{\epsilon}\left(  s\right)
-Y^{\delta}\left(  s\right)  |^{2}ds+\Big(\tfrac{3}{4}-\tfrac{Ke^{\beta T}%
}{8L^{2}}\Big)\mathbb{E}\int_{0}^{T}e^{\beta s}|Z^{\epsilon}\left(
s\right)
-Z^{\delta}\left(  s\right)  |^{2}ds\medskip\\
\leq\left(  \epsilon+\delta\right)  C_{1}e^{C_{2}T}M_{2}.
\end{array}
\]
Therefore, for $\beta$ large enough and for $K,T$ sufficiently small
we deduce
that%
\begin{equation}
\displaystyle\mathbb{E}\int_{0}^{T}e^{\beta s}|Y^{\epsilon}\left(
s\right) -Y^{\delta}\left(  s\right)
|^{2}ds+\mathbb{E}\int_{0}^{T}e^{\beta
s}|Z^{\epsilon}\left(  s\right)  -Z^{\delta}\left(  s\right)  |^{2}%
ds\leq\left(  \epsilon+\delta\right)  C_{1}e^{C_{2}T}M_{2}\,.\label{prop 3.3}%
\end{equation}
From Burkholder--Davis--Gundy's inequality we can infer that%
\[%
\begin{array}
[c]{l}%
2\mathbb{E}\Big[\sup\limits_{t\in\left[  0,T\right]
}\Big|\displaystyle\int _{t}^{T}e^{\beta s}\langle
Y^{\epsilon}\left(  s\right)  -Y^{\delta}\left( s\right)
,(Z^{\epsilon}\left(  s\right)  -Z^{\delta}\left(  s\right)
)dW\left(  s\right)  \rangle\Big|\Big]\medskip\\
\leq\displaystyle\frac{1}{2}\mathbb{E}\Big[\sup\limits_{t\in\left[
0,T\right]  }e^{\beta t}|Y^{\epsilon}\left(  t\right)
-Y^{\delta}\left( t\right)
|^{2}\Big]+72\mathbb{E}\int_{0}^{T}e^{\beta s}|Z^{\epsilon}\left(
s\right)  -Z^{\delta}\left(  s\right)  |^{2}ds
\end{array}
\]
and therefore inequality (\ref{prop 3}) follows.$\smallskip$

\noindent\textrm{D.} \textit{Passage to the limit}$\smallskip$

The solution will obtain as the limit of the approximating sequence
$\left( Y^{\epsilon},Z^{\epsilon},\nabla\varphi_{\epsilon}\left(
Y^{\epsilon}\right) \right)  $.

From Proposition \ref{prop 3} we see that there exist $Y\in\mathcal{S}%
_{T}^{2,m}$ and $Z\in\mathcal{H}_{T}^{2,m\times d}$ such that%
\[
\lim_{\epsilon\searrow0}Y^{\epsilon}=Y\quad\text{in}\quad\mathcal{S}_{T}%
^{2,m}\quad\text{and}\quad\lim_{\epsilon\searrow0}Z^{\epsilon}=Z\quad
\text{in}\quad\mathcal{H}_{T}^{2,m\times d}~.
\]
Moreover, there exists a subsequence $\epsilon_{n}\searrow0$ such
that
$\mathbb{P}$-a.s.%
\[%
\begin{array}
[c]{l}%
\sup_{t\in\left[  0,T\right]  }\left\vert Y^{\epsilon_{n}}\left(
t\right)
-Y\left(  t\right)  \right\vert \rightarrow0,\medskip\\
\displaystyle\int_{0}^{T}\left\vert Z^{\epsilon_{n}}\left(  t\right)
-Z\left(  t\right)  \right\vert dt\rightarrow0.
\end{array}
\]
In addition, the passage to the limit in (\ref{prop 1}) gives%
\[
\mathbb{E}\Big[\sup\limits_{t\in\left[  0,T\right]  }|Y\left(
t\right) |^{2}\Big]+\mathbb{E}\int_{0}^{T}\left\vert Z\left(
s\right)  \right\vert ^{2}ds\leq C_{1}e^{C_{2}T}M_{1}~.
\]
Inequality (\ref{prop 2}$-a$) implies that there exists $U\in\mathcal{H}%
_{T}^{2,m}$ such that for a subsequence $\epsilon_{n}\searrow0$,%
\[
\nabla\varphi_{\epsilon_{n}}\left(  Y^{\epsilon_{n}}\left(  s\right)
\right)
\rightharpoonup U,\;\text{weakly in Hilbert space }\mathcal{H}_{T}^{2,m}%
\]
and then%
\[
\mathbb{E}\int_{0}^{T}\left\vert U\left(  s\right)  \right\vert ^{2}%
ds\leq\liminf_{n\rightarrow\infty}\mathbb{E}\int_{0}^{T}|U^{\epsilon_{n}%
}\left(  s\right)  |^{2}ds\leq c_{1}e^{C_{2}T}M_{2}~.
\]
From (\ref{prop 2}$-a,c$) we see that%
\begin{equation}
\underset{{\epsilon}\searrow0}{\lim}J_{\epsilon}(Y^{\epsilon})=Y\quad
\text{in}\quad\mathcal{H}_{T}^{2,m} \label{prop 4.1}%
\end{equation}
and%
\begin{equation}
\underset{{\epsilon}\searrow0}{\lim}\mathbb{E}\left(  \left\vert
J_{\epsilon }(Y^{\epsilon}\left(  t\right)  )-Y\left(  t\right)
\right\vert ^{2}\right)
=0,\;\forall t\in\left[  0,T\right]  . \label{prop 4.2}%
\end{equation}
Using Fatou's Lemma, (\ref{prop 2}$-b$) and the lower semicontinuity
of
$\varphi$ we deduce that%
\[
\mathbb{E}\int_{0}^{T}\varphi\left(  Y\left(  t\right)  \right)
dt+\mathbb{E}\Big[\varphi\left(  Y\left(  t\right)  \right)
\Big]\leq c_{1}e^{C_{2}T}M_{2}~.
\]
Since $U^{\epsilon}:=\nabla\varphi_{\epsilon}\left(
Y^{\epsilon}(t)\right)
\in\partial\varphi(J_{\epsilon}(Y^{\epsilon}(t))$, for any $t$,%
\[
U^{\epsilon}(t)(V(t)-J_{\epsilon}(Y^{\epsilon}(t)))+\varphi(J_{\epsilon
}(Y^{\epsilon}(t)))\leq\varphi(V(t)),\text{ for all }V\in\mathcal{H}_{T}%
^{2,m},\;t\in\lbrack0,T],
\]
hence for all $A\times\lbrack a,b]\subset\Omega\times\lbrack0,T]$%
\[
\mathbb{E}\Big({\int_{a}^{b}}\mathbf{1}_{A}U^{\epsilon}%
(t)\big(V(t)-J_{\epsilon}(Y^{\epsilon}(t))\big)dt\Big)+\mathbb{E}%
\Big({\int_{a}^{b}}\mathbf{1}_{A}\varphi(J_{\epsilon}(Y^{\epsilon
}(t)))dt\Big)\leq\mathbb{E}\Big({\int_{a}^{b}}\mathbf{1}_{A}\varphi
(V(t))dt\Big).
\]
But $\varphi$ is a proper convex l.s.c. function, hence passing to
the
$\liminf$ and using (\ref{prop 4.1}) and (\ref{prop 4.2}) we deduce that%
\[%
\begin{array}
[c]{r}%
\mathbb{E}\Big({\displaystyle\int_{a}^{b}}\mathbf{1}_{A}%
U(t)(V(t)-Y(t))dt\Big)+\mathbb{E}\Big({\displaystyle\int_{a}^{b}}%
\mathbf{1}_{A}\varphi(Y(t))dt\Big)\leq\mathbb{E}\Big({\displaystyle\int
_{a}^{b}}\mathbf{1}_{A}\varphi(V(t))dt\Big),\text{ }\medskip\\
\text{for all }A\times\lbrack a,b]\subset\Omega\times\lbrack0,T],
\end{array}
\]
which means that%
\[
U(t)(V(t)-Y(t))+\varphi(Y(t))\leq\varphi(V(t))~dP\otimes
dt\quad\text{a.e. on }\Omega\times\lbrack0,T].
\]
Therefore the property (\ref{def sol}$-iii$) is obtained.

Finally, passing to the limit in (\ref{BSDE time delay 3}) an using
also the
inequality%
\[%
\begin{array}
[c]{l}%
\left\vert \displaystyle\int_{t}^{T}\left[  F\left(
s,Y^{\epsilon}\left( s\right)  ,Z^{\epsilon}\left(  s\right)
,Y_{s}^{\epsilon},Z_{s}^{\epsilon
}\right)  -F\left(  s,Y\left(  s\right)  ,Z\left(  s\right)  ,Y_{s}%
,Z_{s}\right)  \right]  ds\right\vert ^{2}\medskip\\
\leq2\left(  2L^{2}+K\right)  T\sup_{t\in\left[  0,T\right]
}\left\vert Y^{\epsilon}\left(  t\right)  -Y\left(  t\right)
\right\vert ^{2}+2\left( 2L^{2}+K\right)
\displaystyle\int_{0}^{T}\left\vert Z^{\epsilon_{n}}\left( t\right)
-Z\left(  t\right)  \right\vert ^{2}dt,
\end{array}
\]
we deduce that the triple $\left(  Y,Z,U\right)  $ satisfy equation
(\ref{def sol}$-iv$).\hfill
\end{proof}

\section*{Acknowledgements}

The authors wish to thank to Aurel R\u{a}\c{s}canu for its valuable comments.

The work of the first author was supported by IDEAS project, no.
241/05.10.2011 and, for the second one, by POSDRU/89/1.5/S/49944
project.

\bigskip

\end{document}